\title{An elementary way to introduce a Perron-like integral}
\author{Hana Bendov\'a and Jan Mal\'y}
\thanks{%
The second author was supported in part 
he research project MSM 0021620839 
of the Czech Ministry M\v SMT
and in part by the grant GA\,\v{C}R 201/09/0067}
\address{Department of Mathematical Analysis, Charles University,
So\-ko\-lovsk\'a 83, 186~00 Prague 8, Czech Republic}
\email{\tt haaanja@gmail.com}
\address{Department of Mathematical Analysis, Charles University,
So\-ko\-lovsk\'a 83, 186~00 Prague 8, Czech Republic, and\newline
\hglue\parindent Department of Mathematics, 
J. E. Purkyn\v e University,
\v Cesk\'e ml\'a\-de\v ze~8,
400~96 \'Ust\'{\i} nad Labem, Czech Republic}
\email{\tt maly@karlin.mff.cuni.cz}
\def\cit#1{}
\def\diam{\operatorname{diam}}
\def\ep{\varepsilon}
\def\er{\mathbb R}
\def\bF{\mathbf F}
\def\bG{\mathbf G}
\def\bPhi{\mathbf \Phi}
\def\rn{\mathbb R^n}
\def\D{\mathcal D}
\def\I{\mathcal I}
\newtheorem{theorem}{Theorem}
\newtheorem{proposition}{Proposition}
\newtheorem{lemma}{Lemma}
\newtheorem{corollary}{Corollary}
\theoremstyle{definition}
\newtheorem{definition}{Definition}
\newtheorem{remark}{Remark}
\def\eqn#1$$#2$${\begin{equation}\label#1#2\end{equation}}
\begin{document}

\begin{abstract} We give an alternative definition of integral at the
generality of the Perron integral and propose an exposition of 
the foundations of integral theory starting from this new definition. 
Both definition and proofs needed for the development are unexpectedly simple.
We show how to adapt the definition to cover the multidimensional and 
Stieltjes case and prove that our integral is equivalent to the 
Henstock-Kurzweil(-Stieltjes) integral.
\end{abstract}

\keywords{Perron integral, Henstock-Kurzweil integral}

\subjclass[2000] {Primary 26A39. Secondary 26A24, 26A36.}

\maketitle

\section{Introduction}

The aim of its paper is to built  self-contained foundations of the 
theory of nonabsolutely convergent integral based on a new definition.
Our definition is a slight modification of definitions used previously, but
provides a possibility of a surprisingly comprehensible development 
of the theory.

We are focused on integrals which include the Lebesgue integral and 
integrate all derivatives. First such a construction was done by
Denjoy \cite{De} in 1912, shortly followed by Luzin \cite{Lu}.
The integral of Perron \cite{Pe} from 1914 uses families of major and minor
functions instead of a single antiderivative.
A ``weighted'' analogue of the Perron integral is the Perron-Stieltjes
integral introduced by Ward \cite{Wa}.
In 1957, Kurzweil \cite{Ku}
introduced a gauge generalized Riemann type integral, which is equivalent to the
Perron integral.
The same construction was found
independently by Henstock \cite{He},
see also \cite{He1}, \cite{He2}. 
The advantage of this construction is
that it is based on Riemann sums which are commonly used to illustrate and motivate
the concept of integral. There have been made serious attempts to
build an elementary course of integration on basis of the Henstock-Kurzweil
integral, e.g.\ \cite{LV}. 
A completely different idea of a curiously simple definition of integral 
in Perron-like generality is due to Tolstov \cite{To}.

We present a definition of integral which is also equivalent with 
Perron's definition.
An intermediate step 
between our integral and the Perron integral is the variational integral.
This has been introduced by Henstock \cite{He-var} and admits various
formulations, see e.g.\ \cite{He-var1}. 
In \cite{Go}, Definition 11.7, we may find a version which can be
stated as follows:

A function $f:[a,b]\to\er$ is variational integrable if there
exists a function $F:[a,b]\to\er$ with the following property: For each $\ep>0$
there exists an increasing function $\varphi_{\ep}:[a,b]\to\er$ 
and a strictly positive function $\delta:[a,b]\to\er$
such that
$[\varphi_\ep]_a^b<\ep$ and for each $x,y\in [a,b]$ we have
$$
|y-x|\le \delta(x)\implies |F(y)-F(x)-f(x)(y-x)|\le
|\varphi_{\ep}(y)-\varphi_{\ep}(x)|.
$$
The integral of $f$ over $[a,b]$ is then $[F]_a^b$.

We simplify further the definition of variational integral: we use 
a single control function and replace the explicit 
description of $\ep$--$\delta$ dependence
by an ordinary limit.
This enables, among others, to use a language
in which the definition looks almost like 
the ordinary definition of (anti)derivative and proofs of tools like 
integration by parts and change of variables are short and elegant.

The idea of a single control function appears in analysis also in other contexts:
Cornea \cite{Cor1}, \cite{Cor2} (see also \cite{LMNS})
uses a control function to modify Perron's 
construction of solution of the Dirichlet problem 
in potential theory. Notice that there is an parallel with our construction;
also in this case, the original Perron's idea is based on upper and lower
functions. 
Another relevant concept is that of delta-convex mappings. Originally,
(scalar) delta-convex functions are diferences of convex functions.
Vesel\'y and Zaj\'\i \v cek \cite{VZ} use control functions to 
generalize delta-convexity to the vector valued case.

Now, our definition is the following:

\begin{definition} Let $I=(a,b)\subset \er$ be an interval and $f,F:I\to\er$ be functions.
We say that $f$ is an \textit{$MC$-derivative}
(monotonically controlled derivative)
of $F$ if there exists a strictly increasing
function  $\varphi\colon I\to \er$ (the so-called \textit{control function} to
the pair $(F,f)$)
such that 
\eqn{def}
$$
\lim_{y \to x}{\frac{F(y)-F(x)-f(x)(y-x)}{\varphi(y)-\varphi(x)}}=0,\qquad x\in
I.
$$
We also say that $F$ is an \textit{$MC$-antiderivative} (or an \textit{indefinite $MC$-integral})
of $f$.
\end{definition}

Recall that the $MC$-integral coincides with the Perron integral
for which the development of calculus is well known.
Despite of this we 
hope that it is valuable to present an independent development of the theory. Indeed, we believe that the concept 
of $MC$-integral is comprehensible for students-beginners. Therefore we want to indicate how the theory of integral
can be developed from scratch. However, we address this exposition to experienced mathematicians,
so that our text is not exactly in the style of a course for beginners; this will be a task for a textbook project.

Any reasonable notion of indefinite integral must have the property that
two indefinite integrals of the same function 
can differ only by an additive constant. This is mostly established by the
observation that if $F'$ is positive, the $F$ is increasing.
This property is valid also for $MC$-differentiation as shown in Section 
\ref{s:basic}.

Then our exposition includes basic tools for integration 
like integration by parts and change of variables, 
this will be in Section \ref{s:calc}. 
The results are well known in this generality, see e.g.\ \cite{Pf-pp},
\cite{Ku-book}.
However, our definition leads to proofs which are very simple.

In Section \ref{s:mct} we prove the monotone convergence theorem. 
This opens a gateway for a development of the theory
of integral in spirit of courses of Lebesgue integration.

The construction of the $MC$-integral can be easily adapted to 
more general situations.
We show a simultaneous generalization to  functions of several variables and to
Stieltjes integration (in another terminology, integration with respect to Radon
measures), this is done in Section \ref{s:multdim}. 
The main result of this section is that, even in this generality,
the $MC$-integral coincides with a corresponding integral defined by the
Henstock-Kurzweil construction (and thus also with the Perron integral). 
This result is not so hard once we know that
the Henstock-Kurzweil integral coincides with the variational integral
\cite{He1}; the crucial step of this equivalence is the Henstock lemma.
We demonstrate the correspondence on a model case, but the idea indicates
that practically each integral constructed via gage-fine tagged 
partitions has its $MC$-version and vice versa.
For various such
definition of multidimensional 
integrals, discussion of problems  and further bibliography  we
refer e.g.\ to \cite{He1}, \cite{He2}, \cite{JK}, \cite{JKS}, \cite{Ju},
\cite{Ku-book},  \cite{Ma1}, \cite{NP}, \cite{Pf1}, \cite{Pf}.

We will not develop the foundations of the multidimensional theory as 
in the first part of the paper, because here the advange in simplicity
is already not so distinct.
However, it is worth to mention that once started the
development of integration theory with the $MC$-integral, it is possible to
proceed to multidimensional integration and obtain the same results
as in the theory of multidimensional Henstock-Kurzweil(-Stieltjes) 
integral. From the didactical point of view, it is perhaps recommendable 
to resctrict 
soon the attention to the class of absolutely integrable functions, which
are exactly the Lebesgue(-Stieljes) integrable functions.

In this paper, \textit{positive} 
means $\ge 0$ whereas $>0$ is labelled as \textit{strictly positive}.
Similar convention applies to the terms \textit{increasing} and \textit{strictly
increasing}.

\section{$MC$-derivatives}\label{s:basic}

In this section we prove some basic properties of $MC$-differentiation.
First, we note that pairs $(F,f)$ such that $f$ is a $MC$-derivative of $F$ form 
a vector space. Also, it is evident that any ordinary derivative 
is an $MC$-derivative. However, there is one serious difference. Ordinary
derivatives are unique. If we want to have a concept of derivative general
enough to differentiate any indefinite Lebesgue integral, we necesarilly lose
uniqueness, namely, the derivative is pointwise determined only up to a set 
of measure zero. At this stage of exposition we do not need to speak on sets of
measure zero, however, it may be useful to note that the exceptional sets are
small. To illustrate this phenomenon, we asume that $f$ and $g$ are
$MC$-derivatives of $F$, with control functions $\varphi$ and $\psi$,
respectively. Then it is easy to observe that the monotone function
$\eta=\varphi+\psi$ has infinite derivative at each point of the set
$\{x:f(x)\ne g(x)\}$. 

It is useful to notice that if we add an increasing function to a control
function to $(F,f)$, we obtain also a control function to $(F,f)$.

If $\varphi$ is a control function to $(F,f)$, then any function of the form
$\alpha\varphi+\beta$, where $\alpha,\,\beta$ are constants, $\alpha>0$, is
also a control function to $(F,f)$. Such a modification of a control function is
called a \textit{rescaling}.

\begin{proposition}\label{p:cont}
Let $F$ be an indefinite $MC$-integral of a function $f$ on an interval
$I\subset\er$. 
Then $F$ is continuous.
\end{proposition}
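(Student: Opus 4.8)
The plan is to fix an arbitrary point $x\in I$ and prove directly that $F(y)\to F(x)$ as $y\to x$. The whole argument rests on the algebraic identity
\[
F(y)-F(x)-f(x)(y-x)=\frac{F(y)-F(x)-f(x)(y-x)}{\varphi(y)-\varphi(x)}\,\bigl(\varphi(y)-\varphi(x)\bigr),
\]
valid for $y\ne x$. Here the first factor on the right tends to $0$ as $y\to x$ by the defining relation \eqref{def}, so it suffices to show that the second factor $\varphi(y)-\varphi(x)$ stays bounded near $x$. Granting that, the left-hand side tends to $0$, and adding back the term $f(x)(y-x)$, which obviously tends to $0$, gives $F(y)\to F(x)$, i.e.\ continuity of $F$ at $x$.

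The only place where the control function enters is the boundedness of $\varphi(y)-\varphi(x)$ on a neighbourhood of $x$. Since $I$ is an open interval, $x$ is an interior point, and an increasing real-valued function possesses finite one-sided limits $\varphi(x-)$ and $\varphi(x+)$ at every interior point; hence $|\varphi(y)-\varphi(x)|$ is bounded for $y$ close to $x$. I would also record that $\varphi(y)-\varphi(x)\ne 0$ for $y\ne x$ by strict monotonicity, so the quotient appearing in \eqref{def} is genuinely defined on a punctured neighbourhood of $x$ and the identity above makes sense.

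I expect the main conceptual obstacle to be resisting the temptation to argue that the denominator $\varphi(y)-\varphi(x)$ itself tends to $0$: a general strictly increasing control function may have a jump at $x$, so this denominator need not vanish, and one cannot appeal to a quotient-limit rule with nonzero denominator either. The correct and elementary remedy is precisely the product decomposition above, exploiting only that the product of a quantity tending to $0$ with a bounded quantity tends to $0$. No estimate more delicate than this is required, in keeping with the paper's thesis that proofs in this framework are unexpectedly simple.
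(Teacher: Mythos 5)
Your proof is correct and follows essentially the same route as the paper: the authors likewise observe that $\varphi$ is locally bounded (being increasing) and conclude from \eqref{def} that $F(y)-F(x)-f(x)(y-x)\to 0$, hence $F$ is continuous at $x$. Your explicit product decomposition and the remark that $\varphi(y)-\varphi(x)$ need not tend to $0$ merely spell out the details the paper leaves implicit.
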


\begin{proof}
Let $x\in I$. Since $\varphi$ is locally bounded, from \eqref{def} we obtain
$$
\lim_{y\to x}(F(y)-F(x)-f(x)(y-x))=0.
$$
It follows that $F$ is continuous at $x$.
\end{proof}

In the following theorem we prove that an indefinite $MC$-integral of a
positive function is increasing. 

\begin{theorem}\label{t:mp} 
Let $F$ be an indefinite $MC$-integral of a 
function $f\ge 0$ on an open interval $I=(a_0,b_0)\subset \er$. 
Then $F$ is increasing.
\end{theorem}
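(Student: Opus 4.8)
The plan is to fix an arbitrary pair $c,d\in I$ with $c<d$ and prove $F(c)\le F(d)$; since $I$ is open, $[c,d]\subset I$ and this is all that monotonicity requires. Let $\varphi$ be a control function to $(F,f)$. Rather than work with $F$ alone, I would introduce, for a fixed $\ep>0$, the auxiliary function $H=F+\ep\varphi$ on $[c,d]$ and aim to show $H(t)\ge H(c)$ for every $t\in[c,d]$. Unwinding \eqref{def} at a point $x$: for this $\ep$ there is $\delta(x)>0$ such that $|F(y)-F(x)-f(x)(y-x)|\le\ep|\varphi(y)-\varphi(x)|$ whenever $0<|y-x|<\delta(x)$. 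Combining this with $f(x)\ge0$ and the strict monotonicity of $\varphi$ gives the two one-sided estimates I expect to drive the whole argument: for $y\in(x,x+\delta(x))$ one gets $H(y)-H(x)\ge f(x)(y-x)\ge0$, and for $y\in(x-\delta(x),x)$ one gets $H(x)-H(y)\ge f(x)(x-y)\ge0$. Thus $H$ locally fails to decrease to the right of, and up to, each point, albeit only in this gauge-dependent sense.

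The second, and main, step is to convert these local one-sided estimates into the global inequality $H\ge H(c)$ on $[c,d]$ by a supremum (Cousin-type) argument. I would set $A=\{t\in[c,d]:H(\tau)\ge H(c)\text{ for all }\tau\in[c,t]\}$ and $s=\sup A$; note $c\in A$, and $A$ is an interval with left endpoint $c$, so every $y\in[c,s)$ lies in $A$. First I would show $s\in A$: since $H(y)\ge H(c)$ for $y\in[c,s)$, applying the left estimate at $x=s$ to $y$ slightly below $s$ yields $H(s)\ge H(y)\ge H(c)$, so $H\ge H(c)$ on all of $[c,s]$. Then I would show $s=d$: if $s<d$, the right estimate at $x=s$ gives $H(y)\ge H(s)\ge H(c)$ for $y$ slightly above $s$, whence $[c,y]\subset A$ for some $y>s$, contradicting the definition of $s$. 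Hence $H(d)\ge H(c)$, that is $F(d)-F(c)\ge-\ep\,(\varphi(d)-\varphi(c))$. Since $\varphi(d)-\varphi(c)$ is a fixed finite positive number and $\ep>0$ is arbitrary, letting $\ep\to0^+$ yields $F(c)\le F(d)$.

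I expect the delicate point to be precisely this passage from the local to the global. The control function $\varphi$ is only assumed monotone, not continuous, so $H=F+\ep\varphi$ need not be continuous, and no naive limit or connectedness argument is available at the critical point $s$. What rescues the argument is that the defining relation \eqref{def} is a genuine two-sided limit: it furnishes an estimate on $H$ as $s$ is approached from the left (needed to see that the supremum is attained, $s\in A$) and a separate estimate as one leaves $s$ to the right (needed to see $s=d$). The hypothesis $f\ge0$ enters only through the signs $f(x)(y-x)\ge0$ in the two one-sided estimates, which is exactly what makes $H$ behave monotonically in the required one-sided senses.
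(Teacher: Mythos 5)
Your proof is correct. It shares with the paper the key perturbation idea --- both arguments work not with $F$ itself but with $F$ plus a small multiple of the control function (you take $H=F+\ep\varphi$ and let $\ep\to0^+$ at the end; the paper rescales $\varphi$ once, under a contradiction hypothesis $F(b)<F(a)$, so that $G=F+\varphi$ still satisfies $G(a)>G(b)$) --- and both exploit essentially the same consequence of \eqref{def}, namely that the two-sidedness of the limit yields a usable estimate both as a point is approached from the left and as one leaves it to the right. Where you genuinely diverge is in the local-to-global mechanism. The paper argues by contradiction and localizes the failure by repeated bisection: it produces nested intervals $[a_k,b_k]$ with $G(a_k)>G(b_k)$ shrinking to a single point $x$, and at that one point the full limit relation \eqref{def} (applied along the sequence $x_k\to x$) is contradicted by the sign information $\tfrac{G(x_k)-G(x)}{\varphi(x_k)-\varphi(x)}<0$ and $\tfrac{f(x)(x_k-x)}{\varphi(x_k)-\varphi(x)}\ge0$. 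You instead give a direct ``creeping along'' supremum argument, which requires first unwinding \eqref{def} into an $\ep$--$\delta$ gauge at \emph{every} point of $[c,d]$ and then propagating the inequality $H\ge H(c)$ from $c$ to $d$; your handling of the critical point $s=\sup A$ (attainment from the left, advance to the right) is exactly where the possible discontinuity of $\varphi$ is correctly neutralized. The trade-off: the paper's bisection needs the limit only along one sequence converging to one point, which is slightly more economical and avoids introducing a gauge; your version is constructive rather than by contradiction and yields the quantitative intermediate inequality $F(d)-F(c)\ge-\ep\bigl(\varphi(d)-\varphi(c)\bigr)$ for each $\ep>0$, which some readers may find more transparent. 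Both are complete proofs.
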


\begin{proof} Suppose that there exist $a,b\in I$ such that $a<b$ and
$F(b)-F(a)$. By rescaling we find a control function $\varphi$ with
$(F+\varphi)(a)>(F+\varphi)(b)$. We denote $G=F+\varphi$.
We set $a_1=a$, $b_1=a$ and
$c_1=\frac12(a_1+b_1)$. We choose $[a_2,b_2]$ among the intervals 
$[a_1,c_1]$, $[c_1,b_1]$ such that $G(a_2)>G(b_2)$. We continue recursively and
construct a nested sequence of closed intervals $[a_k,b_k]$ such that
$b_k-a_k=2^{-k}(b-a)$ and $G(a_k)>G(b_k)$.
There exists a point $x$ in the intersection of all intervals 
$[a_k,b_k]$. For each $k$ we can choose $x_k$ among the points $a_k$, $b_k$
such that $x_k\ne x$ and $G(x_k)-G(x)$ have the opposite sign to $x_k-x$.
Since 
$$
\lim_{y\to x}\frac{F(y)-F(x)-f(x)(y-x)}{\varphi (y)-\varphi (x)}=0,
$$
we obtain
$$
\lim_{k\to\infty}\frac{G(x_k)-G(x)-f(x)(x_k-x)}{\varphi(x_k)-\varphi(x)}=1,
$$
however, 
$$
\frac{G(x_k)-G(x)}{\varphi(x_k)-\varphi(x)}<0 \quad \text{and}\quad
\frac{f(x)(x_k-x)}{\varphi(x_k)-\varphi(x)}\ge0,\qquad k=1,2,\dots
$$
This is a contradiction.
\end{proof}

\begin{corollary}\label{c:byconstant} 
If $F$, $G$ are indefinite $MC$-integrals of a function $f$,
then $F-G$ is constant.
\end{corollary}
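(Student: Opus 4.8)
The plan is to reduce the statement to a single application of the monotonicity theorem, Theorem \ref{t:mp}, by exploiting the linearity noted at the beginning of this section. First I would observe that, since the pairs $(F,f)$ for which $f$ is an $MC$-derivative of $F$ form a vector space, the difference pair $(F-G,\,f-f)=(F-G,0)$ is again such a pair. In other words, writing $H=F-G$, the function $H$ is an indefinite $MC$-integral of the zero function, with some control function obtained by adding control functions for $(F,f)$ and $(G,f)$.

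Next I would apply Theorem \ref{t:mp} to $H$: since the constant function $0$ satisfies $0\ge 0$, the theorem guarantees that $H$ is increasing. The key move is then to run the same argument for $-H$. By the vector space property again, the pair $(-H,0)=(G-F,\,0)$ is also admissible, so $-H$ is an indefinite $MC$-integral of $0\ge 0$ and is therefore increasing as well. Hence $H$ is simultaneously increasing and decreasing, and a function on $I$ that is both increasing and decreasing (in the non-strict sense used throughout the paper) takes the same value at any two points. Thus $H=F-G$ is constant.

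I do not expect a serious obstacle here. The only point requiring care is to check that the hypotheses of Theorem \ref{t:mp} really are met by both $H$ and $-H$, which rests entirely on the two elementary observations already available: the vector-space structure of admissible pairs and the fact that $0$ is both $\ge 0$ and $\le 0$. The corollary is essentially a symmetrization of the monotonicity theorem.
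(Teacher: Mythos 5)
Your argument is correct and is essentially the paper's own proof: the authors likewise note that $F-G$ is an indefinite $MC$-integral of the zero function and conclude from Theorem \ref{t:mp} that it is both increasing and decreasing, hence constant. No issues.
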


\begin{proof} The function $F-G$ is an indefinite $MC$-integral of the zero
function and thus it is both increasing and decreasing.
\end{proof}

\section{Calculus of $MC$-integral}\label{s:calc}

\begin{definition} If $f$ has an indefinite $MC$-integral $F$ on $(a,b)$
and $F$ has one-sided proper limits $F(a_+)$, $F(b_-)$ at the endpoints,
then the (definite) $MC$-integral of $f$ over $(a,b)$
is defined as the increment of $F$:
$$
\int_a^bf(x)\,dx=[F]_a^b,
$$
where $[F]_a^b$ denotes $F(b_-)-F(a_+)$. By Corollary \ref{c:byconstant}, this
definition is correct, namely, it does not depend on the choice of the
indefinite integral. It is obvious that integral is a linear functional.
One could also define extended-real-valued integrals this way, but our convention
will be that all integrals are real.
Notice that in this text, all integral symbols refer
to $MC$-integration unless specified otherwise.
\end{definition}

\begin{proposition}
Suppose that $f$ is an $MC$-derivative of $F$ and $g$ is an $MC$-derivative of
$G$ on $I=(a,b)$. 
Then $fG+Fg$ is an $MC$-derivative of $FG$. Hence the formula on
integration by parts
$$
\int_a^bf(x)G(x)\,dx=[FG]_a^b-\int_a^bF(x)g(x)\,dx
$$
holds if the increment and the integral on the right are well defined.
\end{proposition}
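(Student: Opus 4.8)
The plan is to manufacture, out of a control function $\varphi$ for $(F,f)$ and a control function $\psi$ for $(G,g)$, a single strictly increasing control function $\chi$ that witnesses that $fG+Fg$ is an $MC$-derivative of $FG$. First I would expand the relevant increment by the usual telescoping identity,
\[
F(y)G(y)-F(x)G(x)=\bigl(F(y)-F(x)\bigr)G(y)+F(x)\bigl(G(y)-G(x)\bigr),
\]
subtract $\bigl(f(x)G(x)+F(x)g(x)\bigr)(y-x)$, and regroup. A short rearrangement writes the numerator appearing in \eqref{def} for the pair $(FG,\,fG+Fg)$ as a sum of three pieces:
\[
\bigl[F(y)-F(x)-f(x)(y-x)\bigr]G(y),\quad F(x)\bigl[G(y)-G(x)-g(x)(y-x)\bigr],\quad f(x)(y-x)\bigl[G(y)-G(x)\bigr].
\]

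Next I would set $\chi(t)=\varphi(t)+\psi(t)+t$, which is strictly increasing as a sum of strictly increasing functions, and verify that each of the three pieces, divided by $\chi(y)-\chi(x)$, tends to $0$ as $y\to x$. For the first piece, since $\chi-\varphi$ is increasing we have $|\varphi(y)-\varphi(x)|\le|\chi(y)-\chi(x)|$, so the bracketed factor divided by $\chi(y)-\chi(x)$ already tends to $0$ by the defining property of $\varphi$; meanwhile $G(y)$ remains bounded near $x$ because $G$ is continuous by Proposition \ref{p:cont}, so the product tends to $0$. The second piece is symmetric: $\chi-\psi$ is increasing and $F(x)$ is a fixed constant, so it too tends to $0$.

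The genuinely new point, and the only step I expect to cause trouble, is the cross term $f(x)(y-x)\bigl[G(y)-G(x)\bigr]$, which has no counterpart in the classical product rule. It is handled precisely by the identity summand in $\chi$: since $t\mapsto\chi(t)-t=\varphi(t)+\psi(t)$ is increasing, we get $|y-x|\le|\chi(y)-\chi(x)|$, whence the cross term divided by $\chi(y)-\chi(x)$ is bounded in absolute value by $|f(x)|\,|G(y)-G(x)|$, which tends to $0$ again by continuity of $G$. Adding the three estimates shows that $FG$ is an indefinite $MC$-integral of $fG+Fg$ with control function $\chi$. The integration-by-parts formula is then immediate: since $FG$ is an indefinite $MC$-integral of $fG+Fg$, whenever $[FG]_a^b$ is well defined we have $\int_a^b\bigl(f(x)G(x)+F(x)g(x)\bigr)\,dx=[FG]_a^b$, and if in addition $\int_a^b F(x)g(x)\,dx$ is well defined, linearity of the integral yields $\int_a^b f(x)G(x)\,dx=[FG]_a^b-\int_a^b F(x)g(x)\,dx$.
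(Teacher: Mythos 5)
Your proof is correct and takes essentially the same route as the paper: both build the control function $\varphi+\psi+\mathrm{id}$, split the numerator algebraically into the $(F,f)$- and $(G,g)$-error terms plus cross terms, and kill the cross terms using the identity summand together with continuity from Proposition \ref{p:cont}. The only cosmetic differences are that you group into three pieces where the paper uses four, and you invoke continuity of $G$ where the paper invokes continuity of $F$.
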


\begin{proof}
Let $\varphi$ control the pair $(F,f)$ and $\psi$ control the pair $(G,g)$.
We have
\eqn{prod}
$$
\aligned
&F(y)G(y)-F(x)G(x)-(f(x)G(x)+F(x)g(x))(x-a)
\\&\quad =F(y)\Bigl(G(y)-G(x)-g(x)(y-x)\Bigr)
\\&\qquad+G(x)\Bigl(F(y)-F(x)-f(x)(y-x)\Bigr) 
\\&\qquad+g(x)(y-x)\Bigl(F(y)-F(x)-f(x)(y-x)\Bigr)
\\&\qquad+f(x)g(x)(y-x)^2,\qquad x,y\in I.
\endaligned
$$
By Proposition \ref{p:cont}, $F$ is continuous. 
It is then easily seen from \eqref{prod} that the pair $(FG,\,fG+Fg)$ is
controlled by $\eta(x)=\varphi(x)+\psi(x)+x$. Now, by the assumptions, there
exists an indefinite $MC$-integral $H$ of $Fg$ with a well defined increment.
It follows that $FG-H$ is an indefinite $MC$-integral of $fG$ with a well
defined increment.
\end{proof}

\begin{proposition}
Suppose that $F$ is a strictly increasing function 
which maps open interval $(a,b)$
onto an open interval $(c,d)$. Let $G:(c,d)\to \er$ be a function.
Let $f$ be an $MC$-derivative of $F$ on $(a,b)$ and $g$ be an $MC$-derivative 
of $G$ on $(c,d)$. Then $x\mapsto g(F(x))f(x)$ is an $MC$-derivative of
$G\circ F$ on $(a,b)$. Hence, the formula on change of variables
$$
\int_c^dg(y)\,dy=\int_a^bg(F(x))f(x)\,dx
$$
holds provided that the above assumptions are satisfied and at least one of the
integrals converges.
\end{proposition}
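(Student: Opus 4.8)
The plan is to manufacture an explicit control function for the pair $(G\circ F,\,(g\circ F)f)$ out of the two control functions we are given. Let $\varphi$ control $(F,f)$ on $(a,b)$ and let $\psi$ control $(G,g)$ on $(c,d)$. Fix $x\in(a,b)$ and write $u=F(x)$. By Proposition \ref{p:cont} the function $F$ is continuous, and it is strictly increasing by hypothesis; hence, as $y\to x$, the point $v:=F(y)$ tends to $u$ while remaining different from $u$ whenever $y\ne x$. This is precisely what will allow me to feed $v\to u$, $v\ne u$, into the limit expressing the control property of $\psi$.

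First I would decompose the numerator as
$$
G(F(y))-G(F(x))-g(F(x))f(x)(y-x)=A(y)+B(y),
$$
where $A(y)=G(v)-G(u)-g(u)(v-u)$ carries the error of $g$ along the image, and $B(y)=g(F(x))\bigl(F(y)-F(x)-f(x)(y-x)\bigr)$ transfers the error of $f$; the splitting is obtained by adding and subtracting $g(u)(v-u)$ and using $v-u=F(y)-F(x)$. The natural candidate for a control function is $\eta=\varphi+\psi\circ F$, which is strictly increasing because $\varphi$ and $\psi\circ F$ both are (the latter as a composition of strictly increasing functions).

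For the term $B$, the factor $g(F(x))$ is constant in $y$, so dividing by $\varphi(y)-\varphi(x)$ yields $0$ in the limit by the control property of $\varphi$. Since $\eta-\varphi=\psi\circ F$ is increasing, the increments of $\varphi$ are dominated in absolute value by those of $\eta$ and share their sign, whence $B(y)/(\eta(y)-\eta(x))\to0$ as well. For the term $A$, I would write
$$
\frac{A(y)}{(\psi\circ F)(y)-(\psi\circ F)(x)}=\frac{G(v)-G(u)-g(u)(v-u)}{\psi(v)-\psi(u)},
$$
which tends to $0$ as $y\to x$ because $v\to u$, $v\ne u$, and $\psi$ controls $(G,g)$; since $\eta-\psi\circ F=\varphi$ is increasing, the same domination of increments gives $A(y)/(\eta(y)-\eta(x))\to0$. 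Adding the two limits shows that $\eta$ controls $(G\circ F,\,(g\circ F)f)$, so $G\circ F$ is an indefinite $MC$-integral of $x\mapsto g(F(x))f(x)$.

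It remains to match the increments. Because $F$ is a continuous strictly increasing bijection of $(a,b)$ onto $(c,d)$, we have $\lim_{x\to a_+}F(x)=c$ and $\lim_{x\to b_-}F(x)=d$, with $F(x)$ approaching each endpoint monotonically; hence $\lim_{x\to a_+}G(F(x))$ exists exactly when $G(c_+)$ does, and $\lim_{x\to b_-}G(F(x))$ corresponds to $G(d_-)$. Consequently the two integrals converge simultaneously, and when they do, $[G\circ F]_a^b=[G]_c^d$, so the assumption that at least one converges forces both to equal $[G]_c^d$. I expect the only delicate point to be the transfer of the limit through the composition in the term $A$: it is essential that $F$ is both continuous, to guarantee $v\to u$, and strictly increasing, to guarantee $v\ne u$, so that the quotient defining the control property of $\psi$ is legitimately evaluated along $v=F(y)$.
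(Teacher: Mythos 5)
Your proof is correct and follows essentially the same route as the paper: the same splitting of the numerator into the $G$-error along the image plus $g(F(x))$ times the $F$-error, and the same control function $\varphi+\psi\circ F$, with the limit for the first term transferred through the substitution $v=F(y)$ using continuity and strict monotonicity of $F$. Your added care about the domination of increments by those of $\eta$ and about the endpoint correspondence $F(a_+)=c$, $F(b_-)=d$ only makes explicit what the paper leaves to the reader.
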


\begin{proof}
Let $\varphi$ control the pair $(F,f)$ and $\psi$ control the pair $(G,g)$.
We have
$$
\aligned
&G(F(x'))-G(F(x))-g(F(x))f(x)(x'-x)
\\&\quad=G(F(x'))-G(F(x))-g(F(x))(F(x')-F(x))
\\&\qquad
+g(F(x))(F(x'))-F(x)-f(x)(x'-x)),\qquad x,x'\in (a,b).
\endaligned
$$
Since
$$
\aligned
&\lim_{x'\to
x}\frac{G(F(x'))-G(F(x))-g(F(x))(F(x')-F(x))}{\psi(F(x'))-\psi(F(x))}
\\&\quad
=\lim_{y'\to y}\frac{G(y')-G(y)-g(y)(y'-y)}{\psi(y')-\psi(y)}=0,
\qquad x\in (a,b),\;y=F(x),
\endaligned
$$
we easily infer that $\psi\circ F+\varphi$ controls the pair
$(G\circ F,\;(g\circ F)\;f)$. The statement concerning integration follows
immediately.
\end{proof}	

\begin{lemma}\label{l:bounded}
Let $F,\,f:(a,b)\to\er$ be functions and $(a_k)_k$,
$(b_k)_k$ be sequences of real numbers. Suppose that
$a_k\searrow a$ and $b_k\nearrow b$. 
If $f$ is an $MC$-derivative of $F$ on each $(a_k,b_k)$, then $f$ is an
$MC$-derivative on $F$ on $(a,b)$. Moreover, the control function on $(a,b)$
can be chosen to be bounded.
\end{lemma}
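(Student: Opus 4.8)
The plan is to manufacture a single bounded strictly increasing control function on $(a,b)$ out of the control functions $\varphi_k$ given on the exhausting intervals $(a_k,b_k)$, by forming a suitably weighted sum of globally defined increasing modifications of the $\varphi_k$. The guiding principle is that the $MC$-condition at a point is local and that, by the rescaling observation together with the remark that adding an increasing function preserves control, a strictly increasing $\varphi$ controls $(F,f)$ at $x$ as soon as, near $x$, the increment $\varphi(y)-\varphi(x)$ has the same sign as, and dominates a fixed positive multiple of, the increment of some control function that is valid at $x$.

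First I would record the geometry. Since $a_k\searrow a$ and $b_k\nearrow b$, the intervals $(a_k,b_k)$ increase and exhaust $(a,b)$, and each closed interval $[a_k,b_k]$ is contained in $(a_{k+1},b_{k+1})$; in particular $\varphi_{k+1}$ is finite at $a_k$ and $b_k$, hence bounded on $[a_k,b_k]$. I would then define, for each $k$, an auxiliary function $\Psi_k$ on all of $(a,b)$ by clamping the argument to $[a_k,b_k]$, namely $\Psi_k(x)=\varphi_{k+1}(\min\{\max\{x,a_k\},b_k\})$, and rescaling it by a positive constant so that its supremum norm is at most $1$. Each $\Psi_k$ is then increasing on $(a,b)$, bounded, and on $(a_k,b_k)$ it coincides with a positive rescaling of $\varphi_{k+1}$, hence is a control function for $(F,f)$ at every point of $(a_k,b_k)$.

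Next I would set
$$
\varphi=\sum_{k=1}^{\infty}2^{-k}\Psi_k .
$$
Since $|\Psi_k|\le1$, the series converges uniformly and $\varphi$ is bounded; being a positive combination of increasing functions it is increasing, and it is strictly increasing because any two points of $(a,b)$ lie in a common $(a_K,b_K)$ on which $\Psi_K$ is strictly increasing. To verify the control property at a fixed $x\in(a,b)$, I would choose $K$ with $x\in(a_K,b_K)$. For $y$ close to $x$ every increment $\Psi_k(y)-\Psi_k(x)$ shares the sign of $y-x$ (each $\Psi_k$ is increasing), so the terms cannot cancel, and keeping only the $K$-th term yields
$$
|\varphi(y)-\varphi(x)|\ge 2^{-K}\,|\Psi_K(y)-\Psi_K(x)|>0 .
$$
Consequently $\bigl|\tfrac{F(y)-F(x)-f(x)(y-x)}{\varphi(y)-\varphi(x)}\bigr|\le 2^{K}\bigl|\tfrac{F(y)-F(x)-f(x)(y-x)}{\Psi_K(y)-\Psi_K(x)}\bigr|$, and the right-hand side tends to $0$ because $\Psi_K$ controls $(F,f)$ at $x$.

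I expect the main obstacle to be exactly the reason a naive ``patch the $\varphi_k$ together'' argument fails: the control functions live only on their own intervals and may blow up towards the endpoints $a_k,b_k$, while the global control function must be simultaneously strictly increasing and controlling at every point. The clamping-and-rescaling step is what tames the possible unboundedness while preserving control on the interior, and the sign-coherence of increasing increments is what lets a single dominant term govern the ratio. The only care needed is to check that clamping keeps each $\Psi_k$ increasing across the endpoints of $[a_k,b_k]$ and that the $K$-th term is genuinely nonzero for $y\ne x$ near $x$.
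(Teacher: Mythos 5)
Your proposal is correct and follows essentially the same route as the paper: extend each control function (bounded on the smaller interval $[a_k,b_k]$ because it is defined on the larger $(a_{k+1},b_{k+1})$) to an increasing function on all of $(a,b)$ with values in $[0,1]$, and sum with weights $2^{-k}$. The paper extends by the constants $0$ and $1$ outside $[a_k,b_k]$ while you clamp the argument, and you spell out the sign-coherence/domination argument that the paper leaves as ``obvious''; these are only cosmetic differences.
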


\begin{proof} We may assume that $(a_k)_k$ is strictly decreasing and 
$(b_k)_k$ is strictly decreasing. For each $k$, let 
$\varphi_k$ be a control function to $(F,f)$ on $(a_{k+1},b_{k+1})$.
Then $\varphi_k$ is bounded on $(a_k,b_k)$ and by a rescaling we may assume
that $0<\varphi_k<1$ on $(a_k,b_k)$. 
Set
$$
\psi_k(x)=
\begin{cases}
0,& x\le a_k,\\
\varphi_k(x), & a_k<x< b_k,\\
1,&x\ge  b_k.
\end{cases}
$$
Then the function 
$$
\varphi=\sum_{k=1}^{\infty}2^{-k}\psi_k
$$
is obviously a bounded control function to $(F,f)$ on $(a,b)$.
\end{proof}

\begin{proposition}\label{p:glue}
Let $a,b,c\in\er$, $a<b<c$. Let $f:(a,c)\to\er$ 
be $MC$-integrable on $(a,b)$ and $(b,c)$.
Then $f$ is $MC$-integrable on $(a,c)$ and
$$
\int_a^cf(x)\,dx=\int_a^bf(x)\,dx+\int_b^cf(x)\,dx.
$$
\end{proposition}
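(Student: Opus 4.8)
The plan is to glue together the two given indefinite $MC$-integrals and then check the defining condition \eqref{def} by hand at the single junction point $b$. By hypothesis there are indefinite $MC$-integrals $F_1$ of $f$ on $(a,b)$ and $F_2$ of $f$ on $(b,c)$ possessing proper one-sided limits at the respective endpoints. Applying Lemma \ref{l:bounded} on each side (with sequences $a_k\searrow a$, $b_k\nearrow b$ inside $(a,b)$, and analogously on $(b,c)$), I may assume the associated control functions $\varphi_1$ on $(a,b)$ and $\varphi_2$ on $(b,c)$ are bounded. Subtracting constants, I define $F$ on $(a,c)$ so that it coincides with a translate of $F_1$ on $(a,b)$ and with a translate of $F_2$ on $(b,c)$, the translates being chosen so that the two one-sided limits at $b$ agree; I set $F(b)$ equal to this common value. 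Then $F$ is continuous on $(a,b)$ and on $(b,c)$ by Proposition \ref{p:cont}, and continuous at $b$ by the matching of the one-sided limits.

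At every point $x\in(a,b)$ the function $F$ coincides near $x$ with $F_1$ up to an additive constant, so \eqref{def} holds there with control $\varphi_1$, and likewise on $(b,c)$ with $\varphi_2$. The genuine obstacle is the point $b$, since neither $F_1$ nor $F_2$ carries any information about \eqref{def} at $b$: the defining limit is required only on the open subintervals. To treat it I would write $N(y)=F(y)-F(b)-f(b)(y-b)$; continuity of $F$ at $b$ gives $N(y)\to0$ as $y\to b$. I then build a monotone dominating gauge by setting $\mu(y)=\sup\{|N(t)|:t\text{ lies between }b\text{ and }y\}$ and defining $\varphi_b(y)=\operatorname{sgn}(y-b)\sqrt{\mu(y)}$ near $b$, extended by constants further out. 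This $\varphi_b$ is increasing on each side of $b$, and since $|N(y)|\le\mu(y)$ one gets $|N(y)|\big/|\varphi_b(y)-\varphi_b(b)|\le\sqrt{\mu(y)}\to0$.

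It remains to assemble a single strictly increasing control function on $(a,c)$. I would take
$$
\varphi(x)=\psi_1(x)+\psi_2(x)+\varphi_b(x)+x,
$$
where $\psi_1$ equals $\varphi_1$ on $(a,b)$ and its supremum for $x\ge b$, while $\psi_2$ equals $\varphi_2$ on $(b,c)$ and its infimum for $x\le b$ (both well defined thanks to the boundedness arranged above). Each summand is increasing and the term $x$ makes $\varphi$ strictly increasing. Since adding an increasing function to a control function again yields a control function (as observed in Section \ref{s:basic}), $\varphi$ controls $(F,f)$ at every point of $(a,b)$ and of $(b,c)$. At $b$ the same-sign considerations give $|\varphi(y)-\varphi(b)|\ge|\varphi_b(y)-\varphi_b(b)|=\sqrt{\mu(y)}$, with the term $x$ keeping the denominator nonzero, so the ratio built from $N$ still tends to $0$. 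Hence $f$ is an $MC$-derivative of $F$ on all of $(a,c)$.

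Finally I would read off the endpoint limits $F(a_+)=F_1(a_+)-F_1(b_-)$ and $F(c_-)=F_2(c_-)-F_2(b_+)$, which exist by hypothesis, so $f$ is $MC$-integrable on $(a,c)$; and computing $[F]_a^c=F(c_-)-F(a_+)=[F_1]_a^b+[F_2]_b^c$ yields exactly the claimed additivity, the right-hand side being independent of the chosen indefinite integrals by Corollary \ref{c:byconstant}. The only nontrivial point is the verification at $b$; everything else is bookkeeping with additive constants and the already-established stability of control functions under addition of increasing functions.
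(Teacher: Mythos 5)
Your proof is correct, and its global structure---normalize the one-sided limits of $F_1$ and $F_2$ at $b$, glue them into one function $F$, observe that \eqref{def} is inherited at every $x\ne b$ because adding an increasing function preserves control, and read off the additivity of increments---is exactly that of the paper. Where you genuinely diverge is the treatment of the single delicate point $x=b$. The paper rescales $\varphi_1$ and $\varphi_2$ so that $\varphi_1(b_-)<0<\varphi_2(b_+)$; the glued control function then has a \emph{jump} at $b$, the denominator $\varphi(y)-\varphi(b)$ is bounded away from zero as $y\to b$, and \eqref{def} at $b$ collapses to the continuity of $F$ at $b$, with nothing further to construct. You instead manufacture an increasing function $\varphi_b$ near $b$ out of the oscillation $\mu(y)=\sup\{|N(t)|\colon t \text{ between } b \text{ and } y\}$ via $\operatorname{sgn}(y-b)\sqrt{\mu(y)}$, so that $|N(y)|\le\mu(y)=\sqrt{\mu(y)}\cdot|\varphi_b(y)-\varphi_b(b)|$ gives the required $o$-estimate. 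Both arguments are sound. The paper's jump trick is shorter and exploits the fact that control functions are permitted to be discontinuous; your square-root-of-the-oscillation device is the standard way to upgrade a pointwise limit $N(y)\to0$ to monotone control, costs a little more work, and yields the (unneeded but slightly stronger) conclusion that the control function can be taken continuous at $b$. One point you should state explicitly rather than gesture at: when $\mu(y)=0$ for some $y\ne b$, the quotient against $\varphi_b$ alone is $0/0$; this is harmless because then $N(y)=0$ and the summand $x$ in your $\varphi$ keeps the full denominator of absolute value at least $|y-b|>0$, but the estimate should be run against $\varphi$ rather than against $\varphi_b$.
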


\begin{proof}
Let $F_1$ be an indefinite $MC$-integral of $f$ on $(a,b)$ 
controlled by $\varphi_1$ and
$F_2$ be an indefinite $MC$-integral of $f$ on $(b,c)$ controlled by
$\varphi_2$. By Lemma \ref{l:bounded} we can assume that $\varphi_1$, $\varphi_2$ 
are bounded. 
Then the 
one-sided proper limits $F_1(b_-)$, $\varphi_1(b_-)$, $F_2(b_+)$ 
and $\varphi_2(b_+)$ exist. By adding suitable constants to $F_i$ and 
rescaling $\varphi_i$, $i=1,2$,
we can arrange that $F_1(b_-)=F_2(b_+)=0$ and
$\varphi_1(b_-)<0<\varphi_2(b_+)$.
We set
$$
F(x)=
\begin{cases}
F_1(x),& x\in (a,b),\\
0, & x=b,\\
F_2(x),&x\in (b,c),
\end{cases}
\qquad 
\varphi(x)=
\begin{cases}
\varphi_1(x),& x\in (a,b),\\
0, & x=b,\\
\varphi_2(x),&x\in (b,c)\;.
\end{cases}
$$
We claim that the function $F$
is an indefinite $MC$-integral of $f$ on $(a,c)$ controlled by
$\varphi$. Indeed, \eqref{def} holds obviously at each
$x\in (a,b)\cup(b,c)$. For $x=b$ we use the jump of $\varphi$ at $b$
to observe that the limit in \eqref{def} reduces to
$$
\lim_{y\to x}\bigl(F(y)-F(x)-f(x)(y-x)\bigr).
$$
But the last limit clearly vanishes by the continuity of $F$.

\end{proof}

\section{Monotone convergence theorem}\label{s:mct}

In this section we establish the monotone convergence theorem
for the $MC$-integral. This can be applied to show that
the $MC$-integral includes the Lebesgue integral. Namely,
constants are integrable over bounded intervals. 
Using Proposition \ref{p:glue} we obtain that all step 
(=piecewise constant) functions are integrable. We can define
measurable sets as those sets $M$, for which the characteristic function
$\chi_M$ has an indefinite $MC$-integral. 
It is well known that a system of sets which contains all intervals and
is closed under monotone unions and intersections contains already
all Borel sets, see e.g.\ \cite{As}, 1.3.9.
Alternatively we can use Dynkin systems, see e.g.\ \cite{Co}, Section 1.6.
This is the step in which the monotone convergence theorem below is needed.

If we define the measure of a measurable $M$ as the integral of the characteristic
function of $M$ (or as $\infty$ if this integral diverges),
we observe that the ``measure'' is complete (all subsets of null sets are
measurable) and thus the class of all measurable sets contains 
all Lebesgue measurable sets. This argumentation not only leads to a proof that
the $MC$-integral includes the Lebesgue integral, but also bypasses some
difficult steps in construction of the Lebesgue measure.

\begin{theorem}[Monotone convergence theorem]
Let $I=(a,b)$ be an open interval and $(f_k)_k$ be a sequence of 
$MC$-integrable functions on $I$,
$f_n\nearrow f$. If 
\eqn{finite}
$$
\lim_{k\to\infty}\int_a^bf_k(x)\,dx<+\infty,
$$
then $f$ is $MC$-integrable over $I$ and
$$
\int_a^bf(x)\,dx=\lim_{k\to\infty}\int_a^bf_k(x)\,dx.
$$
\end{theorem}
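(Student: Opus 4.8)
The plan is to construct the indefinite integral $F$ of $f$ by hand and then to manufacture a single control function for the pair $(F,f)$. First I would replace the sequence by its increments $g_j=f_{j+1}-f_j\ge0$ and let $G_j$ be an indefinite $MC$-integral of $g_j$; by Theorem~\ref{t:mp} each $G_j$ is increasing. Normalizing the indefinite integrals $F_k$ of $f_k$ so that $F_k(a_+)=0$, I get $G_j\ge0$ and $F_k=F_1+\sum_{j=1}^{k-1}G_j$. Writing $\gamma_j=G_j(b_-)$, one has $\sum_{j=1}^{k-1}\gamma_j=\int_a^b f_k-\int_a^b f_1$, which by \eqref{finite} stays below $L-\int_a^b f_1$, where $L=\lim_k\int_a^b f_k$. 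Hence $\sum_j\gamma_j<\infty$, the partial sums $F_k(x)$ increase and are bounded above, and $F:=\lim_k F_k=F_1+\sum_j G_j$ is a finite function. Since $F$ is the sum of $F_1$ and the bounded increasing function $\sum_j G_j$, it has proper one-sided limits and $[F]_a^b=[F_1]_a^b+\sum_j\gamma_j=L$. Everything then reduces to showing that $f$ is an $MC$-derivative of $F$, after which the definition of the integral and Corollary~\ref{c:byconstant} give $\int_a^b f=L$.

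For the control function I would use, for every $N$, the decomposition
\[
F(y)-F(x)-f(x)(y-x)=A_N(y)+\bigl(H_N(y)-H_N(x)\bigr)-\rho_N(x)(y-x),
\]
where $A_N(y)=F_N(y)-F_N(x)-f_N(x)(y-x)$ is the error of the pair $(F_N,f_N)$, $H_N=\sum_{j\ge N}G_j$ is increasing, and $\rho_N(x)=f(x)-f_N(x)\to0$. The term $A_N$ is routine: a finite sum of control functions controls the corresponding sum of pairs, so $(F_N,f_N)$ is controlled by $\tilde\varphi_N=\varphi_1+\sum_{j<N}\chi_j$, where $\varphi_1$ controls $(F_1,f_1)$ and $\chi_j$ controls $(G_j,g_j)$; using Lemma~\ref{l:bounded} and a rescaling I take all these bounded with $0\le\chi_j\le2^{-j}$. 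I will build the global $\varphi$ so that $\varphi-\tilde\varphi_N$ is increasing for every $N$, whence $|\tilde\varphi_N(y)-\tilde\varphi_N(x)|\le|\varphi(y)-\varphi(x)|$.

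The crux, and the only genuine obstacle, is the upper estimate of the \emph{monotone} tail increment $H_N(y)-H_N(x)$: since $H_N$ would itself be a summand of any naive control function, this increment is a priori comparable to the whole increment of $\varphi$, which would only give coefficient $1$ instead of a small one. I would defeat this by \emph{weighting} the tails. As $\sum_j\gamma_j<\infty$, an elementary argument provides weights $c_j\nearrow\infty$ with $\sum_j c_j\gamma_j<\infty$; since $G_j(x)\le\gamma_j$ for all $x$, the function $\sum_j c_jG_j$ is finite and increasing. I then set
\[
\varphi=\varphi_1+\omega+\sum_{j=1}^{\infty}\chi_j+\sum_{j=1}^{\infty}c_jG_j,
\]
with $\omega$ a fixed strictly increasing function (say $\omega(t)=\arctan t$) whose only role is to absorb the linear term. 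This $\varphi$ is strictly increasing and finite. For $j\ge N$ we have $c_j\ge c_N$, so $c_N\bigl(H_N(y)-H_N(x)\bigr)\le\sum_j c_j\bigl(G_j(y)-G_j(x)\bigr)\le\varphi(y)-\varphi(x)$ for $y>x$, and the analogous inequality for $y<x$; hence $|H_N(y)-H_N(x)|\le c_N^{-1}|\varphi(y)-\varphi(x)|$, which supplies the decisive small factor.

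To finish, fix $x$ and $\ep>0$ and choose $N$ so large that $\rho_N(x)<\ep$ and $c_N^{-1}<\ep$. For $y$ near $x$ the tail is then controlled by
\[
|H_N(y)-H_N(x)|+\rho_N(x)|y-x|\le\ep|\varphi(y)-\varphi(x)|+\ep\,C_x|\varphi(y)-\varphi(x)|,
\]
where $C_x$ arises from $|\omega(y)-\omega(x)|\ge c(x)|y-x|$ near $x$; meanwhile, with this fixed $N$, the control of $(F_N,f_N)$ gives $|A_N(y)|\le\ep|\varphi(y)-\varphi(x)|$ for $y$ close enough to $x$. Adding the estimates yields $\limsup_{y\to x}|F(y)-F(x)-f(x)(y-x)|/|\varphi(y)-\varphi(x)|\le\ep(2+C_x)$, and letting $\ep\to0$ verifies \eqref{def}. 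Thus $\varphi$ controls $(F,f)$, so $F$ is an indefinite $MC$-integral of $f$ and $\int_a^b f=[F]_a^b=L$.
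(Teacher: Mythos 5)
Your proof is correct and follows essentially the same route as the paper's: the same three-term decomposition of $F(y)-F(x)-f(x)(y-x)$ into the error of an approximant, a monotone tail increment, and a $(f-f_N)(y-x)$ term, with a single control function assembled from rescaled controls of the approximants, a weighted sum of the tails, and a strictly increasing summand absorbing $|y-x|$. The only difference is bookkeeping: the paper passes to a subsequence with $F(b_-)-F_k(b_-)<2^{-k}$ and weights the tails $F-F_k$ by $k$, whereas you keep the whole sequence and choose weights $c_j\nearrow\infty$ with $\sum_j c_j\gamma_j<\infty$.
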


\begin{proof} By subtracting $f_1$ we may achieve that $f_k\ge 0$,
$k=1,2,\dots$.
For each $k$, let $F_k$ be the indefinite $MC$-integral of $f_k$
normalized by $F_k(a_-)=0$. Then $(F_k)_k$ is an increasing sequence of increasing 
functions and we can define $F=\lim_k F_k$. From \eqref{finite} we infer that
$F$ is bounded in $(a,b)$, also it is easy to observe that
$F(b_-)=\lim_{k\to\infty}F_k(b_-)$.
Replacing, if
necessary, $(F_k)_k$ by a subsequence, we may assume that
\eqn{rychle}
$$
F_k(b_-)>F(b_-)-2^{-k}.
$$
Since $f_j-f_k\ge 0$ for $j>k$, the function $F_j-F_k$
is increasing by Theorem \ref{t:mp}. Passing to the limit we obtain that
each $F-F_k$ is increasing. For each $k$, let $\varphi_k$ be a control function
to $(F_k,f_k)$. By Lemma \ref{l:bounded}
we may assume that $\varphi_k$ is bounded and thus it may be rescaled to satisfy 
$0<\varphi_k<1$. We set
$$
\varphi(x)=\sum_{k=1}^{\infty}2^{-k}\varphi_k(x)+
\sum_{k=1}^{\infty}k(F(x)-F_k(x))+x.
$$
From \eqref{rychle} we infer that $\varphi$ is finite in $(a,b)$; obviously
it is  strictly increasing. We claim that $\varphi$
controls $(F,f)$. We choose $x\in I$ and $\ep>0$.
We find an integer $k>0$ such that $\frac1k<\ep$ and 
$f(x)-f_k(x)<\ep$. Then we estimate
$$
\aligned
F(y)-F(x)&-f(x)(y-x)=
\Bigl(F_k(y)-F_k(x)-f_k(x)(y-x)\Bigr)
\\&\quad+ \Bigl(F(y)-F(x)-\bigl( F_k(y)-F_k(x)\bigr)\Bigr)
\\&\quad+  (f_k(x)-f(x))(y-x)
\\& \le \bigl(F_k(y)-F_k(x)-f_k(x)(y-x)\bigr)
+\bigl(\tfrac
1k+\ep\bigr)(\varphi(y)-\varphi(x)).
\endaligned
$$
Therefore
$$
\limsup_{y\to x}\frac{F(y)-F(x)-f(x)(y-x)}{\varphi(y)-\varphi(x)}\le 2\ep,
$$
which verifies the claim.
Hence $F$ is an indefinite $MC$-integral of $f$ and 
$$
\int_a^bf(x)\,dx=[F]_a^b=\lim_{k\to\infty}[F_k]_a^b
=\lim_{k\to\infty}\int_a^bf_k(x)\,dx.
$$
\end{proof}

\section{A comparison with the Henstock-Kurzweil integral}
\label{s:multdim}

In this section we show that our $MC$-integral coincides with 
the Henstock-Kurzweil integral. This will be done in the framework
of multidimensional Stieltjes integration.
 
The multidimensional integration requires the language of 
interval functions.
For an introduction to manipulation with interval functions, in particular
to their differentiation, we refer to Saks \cite{Sa}. 
By interval in $\rn$ we mean a Cartesian product of one-dimensional intervals.
We denote by $\I$ the collection of all nondegenerate 
bounded closed intervals in $\rn$.
A finite set $\D\subset \I$ is called a 
\textit{partition} of and interval $I\in\I$ if the intervals 
from $\D$ are nonoverlapping (i.e.\
have disjoint interiors) and $\bigcup_{Q\in\D}Q=I$. 
A function $\bF:\I\to \er$ is said to be 
\begin{itemize}
\item \textit{additive}, if for each interval $I\in\I$
and each partition $\D$ of $I$ we have
$$
 \sum_{Q\in\D}\bF(Q)=\bF(I),
$$
\item \textit{superadditive}, if for each interval $I\in\I$
and each partition $\D$ of $I$ we have
$$
 \sum_{Q\in\D}\bF(Q)\le\bF(I),
$$
\end{itemize}

There are many possibilities how to modify the definition below, for example
to require some ``regularity'' of intervals in the limiting process
like in \cite{Ma1}. This will yield a variety of non-equivalent integrals. 
We illustrate our approach on the simplest model case.
We consider only indefinite integrals. The definite integrals over 
intervals $\notin \I$ can be defined by an appropriate limit process.

\begin{definition} Let $f:\rn\to\er$ be a function and $\bF:\I\to\er$, 
$\bG:\I\to\er$ be additive interval functions.
We say that  $f$ is an \textit{$MC$-derivative} of $\bF$ with respect to $\bG$,
or that $\bF$ is an \textit{indefinite $MC$-integral} of $f$ 
with respect to $\bG$,
if there exists a  superadditive interval function $\bPhi>0$  
(called a \textit{control function}) such that for each $x\in\rn$
and for each sequence $(Q_k)_k$ of intervals from $\I$ such 
that $x\in\bigcap_kQ_k$ and $\diam Q_k\to0$ we have
\eqn{mult}
$$
\lim_{k\to\infty}\frac{\bF(Q_k)-f(x)\bG(Q_k)}{\bPhi(Q_k)}=0.
$$
If in the definition of $MC$-derivative ($MC$-integral) we require $\bPhi$
to be additive, we denote the result as $AMC$-derivative ($AMC$-integral).
\end{definition}

Each  superadditive interval function $\bPhi>0$ has the property that
$$
P,Q\in\I,\quad P\subset Q\implies \bPhi(P)\le \bPhi(Q).
$$
Therefore the terminology ``monotonically controlled'' is again reasonable.

\begin{remark}
Additive interval functions in $\er$ have the form
$$
\bG([a,b])=G(b)-G(a),
$$
where $G:\er\to\er$ is an ``ordinary'' function.
For the one-dimensional Stieltjes differentiation and integration,
\eqref{mult} reduces to 
$$
\lim_{y\to x}\frac{F(y)-F(x)-f(x)(G(y)-G(x))}{\varphi(y)-\varphi(x)}=0.
$$
There is no need to use superadditive control functions in $\er$ because 
each superadditive function $\bPhi>0$ is easily majorized by an additive function,
using the increasing function
$$
\varphi(x)=
\begin{cases}
\bPhi([0,x]),&x>0,\\
0,&x=0,\\
-\bPhi([x,0]),&x<0.
\end{cases}
$$
In higher dimension, the relation between the $MC$ and $AMC$ definition
is not so clear, see \cite{He-var1}.
Since the $MC$-integral includes the $AMC$-integral and is more easy to handle,
we prefer $MC$-integration.
On the other hand, the notion of additive functions may seem to be
more elementary and for the purpose of absolute integration the 
concept of $AMC$-integration is sufficient.
\end{remark}

We recall the definition of $H\!K$-integral with respect to an additive interval
function $\bG$
as it is defined e.g.\ in \cite{Ku-book}.

\begin{definition}
Let $I\in\I$ be an interval. 
A \textit{tagged partition} of $I$ is defined as a couple $(\D,\tau)$ where 
$\D$ is a partition of $I$ and $\tau:\D\to\rn$ is a mapping such that 
\eqn{inner}
$$
\tau(Q)\in Q,\qquad Q\in\D.
$$
The condition \eqref{inner} is not always required in literature 
(it should be dropped for McShane integration), but it should be assumed
for the purpose of $H\!K$-integration. 
We identify a tagged partition $(\D,\tau)$ with the 
set $\{(Q,x)\colon Q\in\D,\;x=\tau(Q)\}$.
By a \textit{gage} we mean a strictly positive function $\delta:\rn\to\er$.
Given a gage $\delta$,
we say that a tagged partition $(\D,\tau)$ of $I$ is \textit{$\delta$-fine}
if for each $(Q,x)\in (\D,\tau)$ we have $\diam Q<\delta(x)$.
Let $f:I\to\er$ be a function and $\alpha\in \er$. We say that $\alpha$
is a $H\!K$ (Henstock-Kurzweil version of Stieltjes) integral
of $f$ over $I$ with respect to $\bG$ if for each $\ep>0$ there
exists a gage $\delta$ such that for each $\delta$-fine partition $(\D,\tau)$
of $I$ we have
\eqn{HK}
$$
\Bigl|\sum_{(Q,x)\in (\D,\tau)}f(x)\bG(Q)-\alpha\Bigr|<\ep.
$$
The $H\!K$-integral is unique if it exists.
We define the \textit{indefinite $H\!K$-integral} of $f:\rn\to\er$ as the 
interval function which assign to each $Q\in\I$ the 
$H\!K$-integral
of $f$ over $Q$ with respect to $\bG$.
It is an additive interval function.
\end{definition}

\begin{definition} Let 
Let $\Psi:\I\times \rn\to\er$ be a function and $\delta$ 
be a gage. 
The $\delta$-variation of $\Psi$ is defined as
$$
V_{\delta}(P,\Psi)=\sup\Bigl\{\sum_{(Q,x)\in (\D,\tau)}|\Psi(Q,x)|\colon
\text{$(\D,\tau)$ is a $\delta$-fine partition of $P$}\Bigr\},\quad P\in \I.
$$
If the interval function $V_{\delta}(\cdot,\Psi)$ is finite, then 
it is superadditive.
\end{definition}

The following statement establishes the equivalence of the Henstock-Kurzweil
integral and the so-called variational integral. The only if part is known as
Henstock's lemma. For the proof see e.g.\ \cite{He15}, Theorem 44.6,
\cite{He2}. 

\begin{proposition}
\label{p:H}
Let $\bF$, $\bG$ be additive interval functions on $\I$, $\bG\ge 0$.
Let $f:\rn\to\er$ be a function.
Let $\Psi:\I\times \rn\to\er$ is defined by
$$
\Psi(Q,x)= f(x)\bG(Q)-\bF(Q),\qquad Q\in\I,\ x\in \rn.
$$
Then $\bF$ is an indefinite
$H\!K$ integral of $f$ with respect to $\bG$ if and only if
$$
\inf\Bigl\{V_{\delta}(I,\Psi)\colon\text{$\delta$ is a gage}\Bigr\}=0
$$
for each $I\in\I$.
\end{proposition}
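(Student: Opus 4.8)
The plan is to prove the two implications separately, using the definition of the $H\!K$-integral and the $\delta$-variation directly.

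For the \emph{only if} direction (Henstock's lemma), suppose $\bF$ is the indefinite $H\!K$-integral of $f$ with respect to $\bG$. Fix $I\in\I$ and $\ep>0$; I would produce a gage $\delta$ witnessing $V_\delta(I,\Psi)\le\ep$. The naive attempt takes the gage $\delta$ from the $H\!K$-definition applied on $I$, which controls $\bigl|\sum_{(Q,x)}\Psi(Q,x)\bigr|$ for any single $\delta$-fine partition $(\D,\tau)$ of $I$. The issue is that the $\delta$-variation involves the sum of \emph{absolute values} $\sum_{(Q,x)}|\Psi(Q,x)|$ over a $\delta$-fine partition of $I$, not over all of $I$. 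The standard device is: given a fixed $\delta$-fine partition $(\D,\tau)$ of $I$, split $\D$ into the tagged pairs on which $\Psi\ge 0$ and those on which $\Psi<0$. On each group, the remaining intervals of $\D$ together with a further $\delta$-fine subpartition of the complementary part of $I$ again form a $\delta$-fine tagged partition of $I$; applying the $H\!K$-estimate to each group and using additivity of $\bF$ and $\bG$ (so that $\sum_{Q\in\D}\Psi(Q,\tau(Q))$ telescopes against $\bF(I)$ and $\alpha$) converts the absolute-value sum into two $H\!K$-controlled sums. This gives $\sum_{(Q,x)}|\Psi(Q,x)|\le 2\ep$, and taking the supremum over $\delta$-fine partitions and then the infimum over gages yields $0$.

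For the \emph{if} direction the argument is more direct. Assume the infimum of $V_\delta(I,\Psi)$ over gages is $0$ for each $I$. Fix $I$ and $\ep>0$; choose a gage $\delta$ with $V_\delta(I,\Psi)<\ep$. For \emph{any} $\delta$-fine tagged partition $(\D,\tau)$ of $I$ I estimate
$$
\Bigl|\sum_{(Q,x)\in(\D,\tau)}f(x)\bG(Q)-\bF(I)\Bigr|
=\Bigl|\sum_{(Q,x)\in(\D,\tau)}\Psi(Q,x)\Bigr|
\le\sum_{(Q,x)\in(\D,\tau)}|\Psi(Q,x)|\le V_\delta(I,\Psi)<\ep,
$$
where the first equality uses additivity of $\bF$ to write $\bF(I)=\sum_{Q\in\D}\bF(Q)$. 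Hence $\bF(I)$ is the $H\!K$-integral of $f$ over $I$ with respect to $\bG$, and since this holds for every $I\in\I$, the function $\bF$ is the indefinite $H\!K$-integral of $f$.

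I expect the main obstacle to be the bookkeeping in the \emph{only if} direction: turning a signed-sum estimate (which the $H\!K$-definition provides) into a bound on the sum of absolute values over a single $\delta$-fine partition. The key technical point is that any $\delta$-fine tagged partition of a \emph{subcollection} of $\D$ can be extended to a $\delta$-fine tagged partition of the whole interval $I$ (refining the complementary intervals by the existence of $\delta$-fine partitions, which must be invoked), so that each signed subsum is comparable with $\alpha=\bF(I)$ up to $\ep$. Everything else—additivity cancellations and passing to the supremum and infimum—is routine, and for a clean write-up I would simply cite the references \cite{He15}, \cite{He2} for the detailed combinatorial splitting, as the statement indicates.
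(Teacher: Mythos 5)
The paper does not actually prove Proposition \ref{p:H}: it states it and refers to \cite{He15}, Theorem 44.6, and \cite{He2}, so any self-contained argument you give is already a different route from the paper's. Your \emph{if} direction is correct, and it is in substance the same computation the authors carry out in the first half of the proof of their final Theorem: additivity of $\bF$ turns $\sum f(x)\bG(Q)-\bF(I)$ into $\sum\Psi(Q,x)$, which is dominated by $V_\delta(I,\Psi)$. Your \emph{only if} direction is the standard proof of Henstock's lemma, which is exactly what the cited references contain, but one point in the sketch needs tightening. After splitting $\D$ into $\D_+$ (where $\Psi\ge0$) and $\D_-$ (where $\Psi<0$), it is not enough to complete $\D_+$ by \emph{some} $\delta$-fine tagged partition of the complementary intervals (mere existence, i.e.\ Cousin's lemma, which is all your parenthetical invokes): you must choose, for each complementary interval $J$, a $\delta$-fine tagged partition of $J$ whose Riemann sum is within $\eta/N$ of $\bF(J)$. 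This is where the hypothesis that $\bF$ is the \emph{indefinite} $H\!K$-integral --- integrability over every subinterval, not just over $I$ --- enters, together with the freedom to shrink gages below $\delta$. With such a completion, additivity gives $\bigl|\sum_{\D_+}\Psi\bigr|\le\ep+\eta$; note the subsum is being compared with $\sum_{Q\in\D_+}\bF(Q)$, not with $\alpha=\bF(I)$ as you write. Summing the two groups and letting $\eta\to0$ yields $V_\delta(I,\Psi)\le2\ep$, and the infimum over gages is $0$. With that correction your argument is complete and matches the classical one.
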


Now, we are ready to compare our $MC$-definition of integral with the
$H\!K$-integral.

\begin{theorem} Let  $f:\rn\to\er$ be a function and $\bF:\I\to\er$
be an interval function. Then $\bF$ is an indefinite $MC$-integral of
$f$ with respect to $\bG$ if and only if $\bF$ is an indefinite 
$H\!K$-integral of $f$ with respect to $\bG$.
\end{theorem}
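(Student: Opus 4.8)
The plan is to route both implications through the variational characterization of Proposition~\ref{p:H}, since it already converts the Henstock--Kurzweil condition into a statement about the $\delta$-variation of $\Psi$, where $\Psi(Q,x)=f(x)\bG(Q)-\bF(Q)$ is exactly, up to sign, the numerator appearing in \eqref{mult}. The first preparatory step is to restate the $MC$-condition in a gage form: by a contradiction argument, the sequential limit in \eqref{mult} holds for a given superadditive $\bPhi>0$ if and only if for every $x\in\rn$ and every $\ep>0$ there is $\rho(x)>0$ such that $x\in Q$ and $\diam Q<\rho(x)$ imply $|\Psi(Q,x)|\le\ep\,\bPhi(Q)$. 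This is the shape that matches the $\delta$-fine machinery.

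For the implication $MC\Rightarrow H\!K$, assume $\bPhi$ controls $(\bF,f)$ and fix $I\in\I$ and $\ep>0$. Take the gage $\delta=\rho$ supplied by the reformulation above for this $\ep$. If $(\D,\tau)$ is any $\delta$-fine partition of $I$, then each tagged piece satisfies $x\in Q$ and $\diam Q<\delta(x)$, so $|\Psi(Q,x)|\le\ep\,\bPhi(Q)$; summing and using superadditivity of $\bPhi$ on the partition $\D$ of $I$ gives
$$
\sum_{(Q,x)\in(\D,\tau)}|\Psi(Q,x)|\le\ep\sum_{Q\in\D}\bPhi(Q)\le\ep\,\bPhi(I).
$$
Hence $V_\delta(I,\Psi)\le\ep\,\bPhi(I)$, and since $\bPhi(I)$ is a fixed finite number and $\ep$ is arbitrary, $\inf_\delta V_\delta(I,\Psi)=0$; Proposition~\ref{p:H} then identifies $\bF$ as an indefinite $H\!K$-integral. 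This direction is routine.

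For the converse $H\!K\Rightarrow MC$, Proposition~\ref{p:H} gives, for each bounded interval, gages making the $\delta$-variation arbitrarily small, and I must assemble from them a single superadditive control function. The basic mechanism is that if $x\in Q$ and $\diam Q<\delta(x)$, then $\{(Q,x)\}$ is itself a $\delta$-fine partition of $Q$, so $|\Psi(Q,x)|\le V_\delta(Q,\Psi)$. Exhausting $\rn$ by cubes $I_0^{(j)}\nearrow\rn$, I would choose gages $\delta_j$ with $V_{\delta_j}(I_0^{(j)},\Psi)\le 4^{-j}$ and set
$$
\bPhi=\sum_{j=1}^{\infty}2^{j}\,V_{\delta_j}(\cdot,\Psi)+\mu,
$$
where $\mu$ is a fixed strictly positive additive interval function, for instance Lebesgue volume, inserted to force $\bPhi>0$. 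Each summand is nonnegative and superadditive, so $\bPhi$ is superadditive, and $\bPhi\ge 2^{n}V_{\delta_n}(\cdot,\Psi)$ for every $n$. Granting finiteness, the $MC$-limit is immediate: for fixed $x$ and a shrinking sequence $Q\ni x$, once $\diam Q<\delta_n(x)$ we get $|\Psi(Q,x)|\le V_{\delta_n}(Q,\Psi)\le 2^{-n}\bPhi(Q)$, so the $\limsup$ of $|\Psi(Q,x)|/\bPhi(Q)$ is at most $2^{-n}$ for all $n$, forcing the limit to be $0$.

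The main obstacle is precisely the finiteness and global coherence of $\bPhi$: the bound $V_{\delta_j}(I_0^{(j)},\Psi)\le 4^{-j}$ controls the $j$-th term only on subintervals of $I_0^{(j)}$, whereas for a fixed $Q$ the finitely many terms whose $I_0^{(j)}$ is smaller than $Q$ are a priori unbounded. I expect to resolve this by localizing the gages along the exhaustion --- shrinking $\delta_j(x)$ on each shell $I_0^{(k)}\setminus I_0^{(k-1)}$ below the distance from $x$ to the shell boundaries, so that every $\delta_j$-fine partition of a bounded interval splits across the shells and each $V_{\delta_j}$ stays finite on all of $\I$. With this localization the series $\sum_{j}2^{j}V_{\delta_j}(Q,\Psi)$ converges for every $Q\in\I$, and the construction goes through.
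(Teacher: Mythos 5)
Your overall architecture is the same as the paper's: the forward direction is the one-line summation argument using superadditivity of $\bPhi$ over a $\delta$-fine partition, and the converse builds a control function as a weighted sum of $\delta$-variations obtained from Henstock's lemma along the exhaustion $I_k=[-k,k]^n$, plus a strictly positive additive term. Your forward direction is complete and correct (routing it through Proposition~\ref{p:H} rather than directly through the Riemann-sum definition is an immaterial difference).

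In the converse, however, the finiteness of $\bPhi$ is not actually secured by what you propose. You correctly diagnose the problem --- the bound $V_{\delta_j}(I_0^{(j)},\Psi)\le 4^{-j}$ says nothing about $V_{\delta_j}(Q,\Psi)$ when $Q\not\subset I_0^{(j)}$ --- but the remedy you sketch (shrinking $\delta_j$ near the shell boundaries so that $\delta_j$-fine partitions split along the shells) only forces partitions to respect the exhaustion; it does not by itself make the variation of $\Psi$ over the outer shells finite, since on $I_0^{(k)}\setminus I_0^{(j)}$ the gage $\delta_j$ is a priori unrelated to any gage for which Henstock's lemma gives a bound. To close this you would additionally have to dominate $\delta_j$ by $\delta_k$ on the $k$-th shell for every $k>j$, and then re-estimate. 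The paper avoids the entire issue with a simpler device: it defines the $k$-th summand as $V_{\delta_k}(Q\cap I_k,\Psi)$ rather than $V_{\delta_k}(Q,\Psi)$, i.e.\ it truncates the \emph{argument} to $I_k$. Each term is then automatically $\le V_{\delta_k}(I_k,\Psi)\le 2^{-k}$, the series converges with room to spare for the weights $k$ (or your $2^j$), superadditivity in $Q$ survives the truncation, and the pointwise estimate still goes through because for fixed $x$ one picks $k$ with $x\in I_{k-1}$ so that all sufficiently small $Q\ni x$ satisfy $Q\subset I_k$. I recommend adopting that truncation; with it your converse becomes correct as written, and the rest of your argument needs no change.
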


\begin{proof} Let $\bPhi:\I\to\er$ be a control function to $(\bF,f)$ with
respect to $\bG$. Choose $I\in\I$ and $\ep>0$. For each $x\in I$ there exists
$\delta(x)>0$ such that for all $Q\in \I$ containing $x$ with $\diam Q<\delta$
we have
$$
\Bigl|\bF(Q)-f(x)\bG(Q)\Bigr|<\ep \bPhi(Q).
$$
We claim that $\delta$ is the desired gage. If $(\D,\tau)$ is a $\delta$-fine 
partition of an interval $I\in\I$, then 
$$
\aligned
\Bigl|\sum_{(Q,x)\in (\D,\tau)}f(x)\bG(Q)-\bF(I)\Bigr|&=
\Bigl|\sum_{(Q,x)\in (\D,\tau)}(f(x)\bG(Q)-\bF(Q))\Bigr|
\\&\le \sum_{(Q,x)\in (\D,\tau)}\Bigl|(f(x)\bG(Q)-\bF(Q))\Bigr|
\\&\le \sum_{(Q,x)\in (\D,\tau)}\ep\bPhi(Q)<\ep\bPhi(I).
\endaligned
$$
It follows that $\bF$ is an indefinite $H\!K$-integral of $f$ with respect to $\bG$.
Conversely, supppose
that $\bF$ is an indefinite $H\!K$-integral of $f$ with respect to $\bG$ and
denote 
$$
\Psi(Q,x)= f(x)\bG(Q)-\bF(Q),\qquad (Q,x)\in \I\times\rn.
$$
We consider the intervals $I_k=[-k,k]^n$. 
Using the Henstock lemma (Proposition \ref{p:H}),
for each  integer $k>0$ we find a gage $\delta_k$ on $I_k$ such that 
$$
V_{\delta_k}(I_k,\Psi)\le 2^{-k} .
$$
Set
$$
\bPhi(Q)=|Q|+\sum_{k=1}^{\infty} k\, V_{\delta_k}(Q\cap I_k). 
$$
Then $\bPhi(Q)$ is a strictly positive finite superadditive function on $\I$.
Given $x\in\er$ and $\ep>0$ we find an integer $k>1$ such that $\frac1k<\ep$
and $x\in I_{k-1}$. Let $Q\in\I$ be such that $x\in Q\subset I_k$ and 
$\diam Q<\delta_k(x)$. Then
$$
 |f(x)\bG(Q)-\bF(Q)|=|\Psi(x,Q)|\le V_{\delta_k}(Q,\Psi)\le 
 \frac1k\bPhi(Q)<\ep\bPhi(Q).
$$
This shows that $\bF$ is an indefinite $MC$-integral of $f$ with respect
to $\bG$ controlled by $\bPhi$.
\end{proof}

\subsection*{Acknowledgment} We thank Lud\v{e}k Zaj\'{\i}\v{c}ek for
valuable discussion and comments.


\begin{thebibliography}{10}

\bibitem{As}
R.~B. Ash.
\newblock {\em Measure, integration, and functional analysis}.
\newblock Academic Press, New York, 1972.

\bibitem{Co}
D.~L. Cohn.
\newblock {\em Measure theory}.
\newblock Birkh\"auser Boston, Mass., 1980.

\bibitem{Cor1}
A.~Cornea.
\newblock R\'esolution du probl\`eme de {D}irichlet et comportement des
  solutions \`a\ la fronti\`ere \`a\ l'aide des fonctions de contr\^ole.
\newblock {\em C. R. Acad. Sci. Paris S\'er. I Math.}, 320(2):159--164, 1995.

\bibitem{Cor2}
A.~Cornea.
\newblock Applications of controlled convergence in analysis.
\newblock In {\em Analysis and topology}, pages 257--275. World Sci. Publ.,
  River Edge, NJ, 1998.

\bibitem{De}
A.~Denjoy.
\newblock Une extension de l'int\'egrale de {M}. {L}ebesgue.
\newblock {\em C. R. Acad. Sci. Paris S\'er. I Math.}, 154:859--862, 1912.

\bibitem{Go}
R.~A. Gordon.
\newblock {\em The integrals of {L}ebesgue, {D}enjoy, {P}erron, and
  {H}enstock}, volume~4 of {\em Graduate Studies in Mathematics}.
\newblock American Mathematical Society, Providence, RI, 1994.

\bibitem{He-var}
R.~Henstock.
\newblock A new descriptive definition of the {W}ard integral.
\newblock {\em J. London Math. Soc.}, 35:43--48, 1960.

\bibitem{He}
R.~Henstock.
\newblock Definitions of {R}iemann type of the variational integrals.
\newblock {\em Proc. London Math. Soc. (3)}, 11:402--418, 1961.

\bibitem{He1}
R.~Henstock.
\newblock {\em Theory of integration}.
\newblock Butterworths, London, 1963.

\bibitem{He-var1}
R.~Henstock.
\newblock Majorants in variational integration.
\newblock {\em Canad. J. Math.}, 18:49--74, 1966.

\bibitem{He15}
R.~Henstock.
\newblock {\em Linear analysis}.
\newblock Plenum Press, New York, 1967.

\bibitem{He2}
R.~Henstock.
\newblock {\em The general theory of integration}.
\newblock Oxford Mathematical Monographs, Oxford Science Publications. The
  Clarendon Press, Oxford University Press, New York, 1991.

\bibitem{JK}
J.~Jarn{\'{\i}}k and J.~Kurzweil.
\newblock A nonabsolutely convergent integral which admits transformation and
  can be used for integration on manifolds.
\newblock {\em Czechoslovak Math. J.}, 35(110)(1):116--139, 1985.

\bibitem{JKS}
J.~Jarn{\'{\i}}k, J.~Kurzweil, and \v{S}. Schwabik.
\newblock On {M}awhin's approach to multiple nonabsolutely convergent integral.
\newblock {\em \v Casopis P\v est. Mat.}, 108(4):356--380, 1983.

\bibitem{Ju}
W.~B. Jurkat.
\newblock The divergence theorem and {P}erron integration with exceptional
  sets.
\newblock {\em Czechoslovak Math. J.}, 43(118)(1):27--45, 1993.

\bibitem{Ku}
J.~Kurzweil.
\newblock Generalized ordinary differential equations and continuous dependence
  on a parameter.
\newblock {\em Czechoslovak Math. J.}, 7 (82):418--449, 1957.

\bibitem{Ku-book}
J.~Kurzweil.
\newblock {\em Nichtabsolut konvergente {I}ntegrale}, volume~26 of {\em
  Teubner-Texte zur Mathematik [Teubner Texts in Mathematics]}.
\newblock BSB B. G. Teubner Verlagsgesellschaft, Leipzig, 1980.
\newblock With English, French and Russian summaries.

\bibitem{LV}
P.~Y. Lee and R.~V{\'y}born{\'y}.
\newblock {\em Integral: an easy approach after {K}urzweil and {H}enstock},
  volume~14 of {\em Australian Mathematical Society Lecture Series}.
\newblock Cambridge University Press, Cambridge, 2000.

\bibitem{LMNS}
J.~Luke\v{s}, J.~Mal\'y, I.~Netuka, and J.~Spurn\'y.
\newblock {\em Integral Representation Theory. Applications to Convexity,
  Banach Spaces and Potential Theory}.
\newblock Studies in Mathematics 35. Walter de Gruyter, Berlin, New York, 2010.

\bibitem{Lu}
N.~Lusin.
\newblock Sur les propri\'et\'es de l'int\'egrale de {M}. {D}enjoy.
\newblock {\em C. R. Acad. Sci. Paris S\'er. I Math.}, 155:1475--1478, 1912.

\bibitem{Ma1}
J.~Mawhin.
\newblock Generalized multiple {P}erron integrals and the {G}reen-{G}oursat
  theorem for differentiable vector fields.
\newblock {\em Czechoslovak Math. J.}, 31(106)(4):614--632, 1981.

\bibitem{NP}
A.~Novikov and W.~F. Pfeffer.
\newblock An invariant {R}iemann type integral defined by figures.
\newblock {\em Proc. Amer. Math. Soc.}, 120(3):849--853, 1994.

\bibitem{Pe}
O.~Perron.
\newblock {\"U}ber den {I}ntegralbegriff.
\newblock {\em Sitzungsber. Heidelberg Akad. Wiss.}, A16:1--16, 1914.

\bibitem{Pf-pp}
W.~F. Pfeffer.
\newblock Integration by parts for the generalized {R}iemann-{S}tieltjes
  integral.
\newblock {\em J. Austral. Math. Soc. Ser. A}, 34(2):229--233, 1983.

\bibitem{Pf1}
W.~F. Pfeffer.
\newblock {\em The {R}iemann approach to integration}, volume 109 of {\em
  Cambridge Tracts in Mathematics}.
\newblock Cambridge University Press, Cambridge, 1993.
\newblock Local geometric theory.

\bibitem{Pf}
W.~F. Pfeffer.
\newblock {\em Derivation and integration}.
\newblock Cambridge Tracts in Mathematics 140. Cambridge University Press,
  Cambridge, 2001.

\bibitem{Sa}
S.~Saks.
\newblock {\em Theory of the integral}.
\newblock Second revised edition. English translation by L. C. Young. With two
  additional notes by Stefan Banach. Dover Publications Inc., New York, 1964.

\bibitem{To}
G.~P. Tolstov.
\newblock Parametric differentiation and the narrow {D}enjoy integral.
\newblock {\em Mat. Sb. (N.S.)}, 53 (95):387--392, 1961.

\bibitem{VZ}
L.~Vesel{\'y} and L.~Zaj{\'{\i}}{\v{c}}ek.
\newblock Delta-convex mappings between {B}anach spaces and applications.
\newblock {\em Dissertationes Math. (Rozprawy Mat.)}, 289:52, 1989.

\bibitem{Wa}
A.~J. Ward.
\newblock The {P}erron-{S}tieltjes integral.
\newblock {\em Math. Z.}, 41:578--604, 1936.

\end{thebibliography}
\end{document}